\documentclass[american,english,number]{elsarticle}
\usepackage[T1]{fontenc}
\usepackage[latin9]{inputenc}
\setlength{\parskip}{\medskipamount}
\setlength{\parindent}{0pt}
\usepackage{amsthm}
\usepackage{amsmath}
\usepackage{graphicx}
\usepackage{amssymb}
\usepackage{esint}

\makeatletter
\numberwithin{equation}{section}
\numberwithin{figure}{section}
  \theoremstyle{plain}
  \newtheorem*{thm*}{Theorem}
\theoremstyle{plain}
\newtheorem{thm}{Theorem}
  \theoremstyle{definition}
  \newtheorem{defn}[thm]{Definition}
  \theoremstyle{plain}
  \newtheorem{prop}[thm]{Proposition}

\journal{Bulletin des Sciences Math\'{e}matiques}

\usepackage{ae,aecompl} 

\usepackage{enumitem}

\renewenvironment{enumerate}{\begin{oldenumerate}[topsep=0pt]}{\end{oldenumerate}}


\setlength{\parskip}{\medskipamount}

\let\citet=\cite

\makeatother

\usepackage{babel}

\begin{document}
\selectlanguage{american}%
\global\long\def\o{\mathbf{1}}

\global\long\def\epsilon{\varepsilon}

\global\long\def\Ko{\mathcal{K}_{0}^{n}}

\global\long\def\T{\mathcal{T}}

\global\long\def\phi{\varphi}

\global\long\def\phip{\varphi^{\circ}}

\global\long\def\R{\mathbb{R}}

\global\long\def\cvx{\textrm{Cvx}_{0}\left(\R^{+}\right)}

\global\long\def\cvxn{\textrm{Cvx}_{0}\left(\R^{n}\right)}

\title{Characterization of Self-Polar Convex Functions}

\selectlanguage{english}%

\author{Liran Rotem}

\ead{liranro1@post.tau.ac.il}

\address{School of Mathematical Sciences, Tel-Aviv University, Tel-Aviv 69978,
Israel}
\begin{abstract}
In a work by Artstein-Avidan and Milman the concept of polarity is
generalized from the class of convex bodies to the larger class of
convex functions. While the only self-polar convex body is the Euclidean
ball, it turns out that there are numerous self-polar convex functions.
In this work we give a complete characterization of all rotationally
invariant self-polar convex functions on $\R^{n}$. \end{abstract}
\begin{keyword}
convexity \sep polarity \MSC[2010] 52A41\sep 26A51 \sep 46B10
\end{keyword}
\maketitle
\selectlanguage{american}%

\section{Introduction}

One of the most important concepts in convex geometry is that of polarity.
Denote by $\Ko$ the family of all closed, convex sets $K\subseteq\R^{n}$
such that $0\in K$. If $K\in\Ko$ we define its polar (or dual) body
as\[
K^{\circ}=\left\{ x\in\R^{n}:\ \left\langle x,y\right\rangle \le1\text{ for every }y\in K\right\} \in\Ko,\]
 where $\left\langle \cdot,\cdot\right\rangle $ is the standard inner
product on $\R^{n}$. It is easy to see that polarity is order reversing,
which means that if $K_{1}\subseteq K_{2}$ then $K_{1}^{\circ}\supseteq K_{2}^{\circ}$.
Polarity is also an involution - for every $K\in\Ko$ we have $\left(K^{\circ}\right)^{\circ}=K$.
It turns out that these two properties characterize polarity uniquely,
as the next theorem shows:
\begin{thm*}
Let $n\ge2$ and $\T:\Ko\to\Ko$ satisfy:
\begin{enumerate}
\item $\T\T K=K$ for all $K$.
\item If $K_{1}\subseteq K_{2}$ then $\T K_{1}\supseteq\T K_{2}.$
\end{enumerate}
Then $\T K=B\left(K^{\circ}\right)$ where $B\in\text{GL}_{n}$ is
a symmetric linear transformation.
\end{thm*}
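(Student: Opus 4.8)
By (i) the map $\T$ is its own inverse, hence a bijection, and with (ii) it becomes an anti-automorphism of the complete lattice $(\Ko,\subseteq)$: it swaps the least and greatest elements, $\T\{0\}=\R^n$ and $\T\R^n=\{0\}$, and it converts arbitrary intersections into closed convex hulls of unions, and conversely. The plan is to determine $\T$ first on the closed half-spaces through the origin and then to bootstrap to all of $\Ko$. (Equivalently one could note that $K\mapsto\T(K^{\circ})$ is an order-\emph{preserving} bijection of $\Ko$ and prove, by the same techniques, that every such map is $K\mapsto AK$ for some $A\in\text{GL}_n$.)

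The first ingredient is a pair of order-theoretic characterizations. Writing $H_u=\{x:\langle x,u\rangle\le1\}$ for $u\neq0$: an element $K\in\Ko$ is a closed half-space containing the origin, or is $\R^n$, if and only if $\{L\in\Ko:L\supseteq K\}$ is totally ordered by inclusion; dually, $\{L\in\Ko:L\subseteq K\}$ is a chain if and only if $K$ is $\{0\}$, a segment $[0,x]$, or a ray $\R_{\ge0}x$. Both statements rest on elementary convexity: a closed convex set that is neither a half-space nor $\R^n$ has two incomparable supporting half-spaces, whereas the supporting half-spaces of a genuine half-space all point the same way and form a chain (and, via recession cones, a closed convex set whose recession cone contains a half-space is itself a half-space or $\R^n$). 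Since $\T$ is an order-reversing bijection it maps the first family bijectively onto the second; as $\T\R^n=\{0\}$, it maps the half-spaces through $0$ onto the union of the segments and the rays. Each of these two posets splits into pairwise incomparable chains — one per direction, the half-spaces of a fixed direction having as least element the one with $0$ on its boundary — and reading off what an order-reversing bijection must do to these chains shows that $\T H_u=[0,\beta(u)]$ for a bijection $\beta$ of $\R^n\setminus\{0\}$, while each half-space with $0$ on its boundary is sent to a ray.

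Next, the bipolar theorem gives $K=\bigcap_{u\in K^{\circ}}H_u$ for every $K\in\Ko$, whence
\[
\T K=\overline{\mathrm{conv}}\bigcup_{u\in K^{\circ}}[0,\beta(u)]=\overline{\mathrm{conv}}\bigl(\beta(K^{\circ})\bigr),
\]
so $\T$ is completely encoded by $\beta$. Substituting this formula into $\T\T=\mathrm{id}$ and using the identity $(H_a\cap H_b)^{\circ}=\mathrm{conv}(0,a,b)$ produces compatibility relations of the form
\[
\overline{\mathrm{conv}}\bigl(\beta(\mathrm{conv}(0,a_1,\dots,a_k))\bigr)=\mathrm{conv}(0,\beta a_1,\dots,\beta a_k).
\]
Together with the requirement that $K\mapsto\overline{\mathrm{conv}}(\beta(K^{\circ}))$ genuinely be an order-reversing involution, these relations force $\beta$ (extended by $\beta(0)=0$) to carry lines through the origin to lines through the origin, to be monotone along each ray, and to send two-dimensional cones to two-dimensional cones; hence $\beta$ induces a collinearity-preserving bijection of the set of directions. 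A form of the fundamental theorem of affine geometry then identifies this direction map with the restriction of some $A\in\text{GL}_n$ — this is the step that really uses $n\ge2$, since for $n=1$ the direction-chains can be rescaled independently of one another and the conclusion of the theorem fails — and the relations above remove the remaining radial freedom, giving $\beta=B$ for a fixed $B\in\text{GL}_n$. Then $\T K=\overline{\mathrm{conv}}(B(K^{\circ}))=B(K^{\circ})$.

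Finally, hypothesis (i) is invoked one last time to force $B$ to be symmetric: from $K=\T\T K=B\bigl((BK^{\circ})^{\circ}\bigr)=BB^{-T}K$ for all $K$ we obtain $BB^{-T}=I$, i.e. $B=B^{T}$. The step I expect to be the real obstacle is the passage from the incidence relations to the linearity of $\beta$: the surrounding reductions are routine convexity-and-lattice bookkeeping, but recovering the affine structure of $\R^n$ from the purely order-theoretic information carried by $\T$ is where the content of the theorem sits, and it is precisely here that the dimension hypothesis $n\ge2$ is essential.
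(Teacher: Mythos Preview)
The paper does not contain a proof of this theorem. It is stated in the introduction as background and attributed to Artstein-Avidan and Milman (with related results credited to Gruber and to B\"or\"oczky--Schneider); the paper then moves on to the functional setting without supplying any argument for it. So there is no ``paper's own proof'' to compare your proposal against.

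For what it is worth, your outline is in the spirit of how results of this type are proved in the cited literature: one isolates a family of extreme lattice elements (rays/segments on one side, half-spaces on the other), shows that an order-reversing involution must match these families, extracts a point map, and then invokes a version of the fundamental theorem of projective/affine geometry to force linearity, with the involution condition finally yielding symmetry of $B$. Your own diagnosis is accurate: the lattice bookkeeping is routine, and the genuine content is the passage from the incidence relations to linearity of $\beta$, which is exactly where $n\ge2$ enters. If you want to turn this into a complete proof rather than a sketch, that step---establishing that $\beta$ is the restriction of a linear map, not merely a projective collineation composed with independent radial rescalings---is the one that needs to be written out carefully; the references cited in the paper are the right place to see the details.
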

This theorem was proven by Artstein-Avidan and Milman in \citet{artstein-avidan_concept_2008},
but similar theorems on different classes of convex bodies were proven
earlier by Gruber in \citet{gruber_endomorphisms_1992} and by B\"{o}r\"{o}czky
and Schneider in \citet{boeroeczky_characterization_2008}.

\selectlanguage{english}%
When dealing with polarity, the Euclidean ball $D_{n}\subseteq\R^{n}$
often plays a special role. The fundamental result here is that $D_{n}$
is the only self-polar convex body: $D_{n}^{\circ}=D_{n}$, and a
very simple proof shows that $D_{n}$ is the only body with this property. 

One example of the importance of $D_{n}$ when dealing with polarity
is the famous Blaschke-Santal\'{o} inequality. It states that if
$K$ is a symmetric convex body (i.e. If $K=-K$), then \[
\left|K\right|\cdot\left|K^{\circ}\right|\le\left|D_{n}\right|\cdot\left|D_{n}^{\circ}\right|=\left|D_{n}\right|^{2}.\]
Here $\left|\cdot\right|$ denotes the Lebesgue volume, and equality
holds if and only if $K$ is a linear image of $D_{n}$. There exists
a generalized version of the inequality for non-symmetric bodies,
but we will not need it here. The interested reader may consult \citet{meyer_blaschke-santalo_1990}. 

In recent years there was a surge of interesting results concerning
generalizations of various concepts from the realm of convex bodies
to the realm of convex (or, equivalently, log-concave) functions.
Our main object of interest will be $\cvxn$, the class of all convex,
lower semicontinuous functions $\phi:\R^{n}\to[0,\infty]$ satisfying
$\phi(0)=0$. Notice that we have an order reversing embedding of
$\Ko$ into $\cvxn$, sending $K$ to \[
\o_{K}^{\infty}(x)=\begin{cases}
0 & x\in K\\
\infty & \text{otherwise.}\end{cases}\]
 A natural question is whether one can extend the concept of polarity
from $\Ko$ to $\cvxn$. The answer to this question is {}``yes'',
as the following theorem by Artstein-Avidan and Milman (\citet{artstein-avidan_hidden_2011})
shows:
\begin{thm*}
Let $n\ge2$ and $\T:\cvxn\to\cvxn$ satisfy:
\selectlanguage{american}%
\begin{enumerate}
\item $\T\T\phi=\phi$ for all $\phi$.
\item If $\phi_{1}\le\phi_{2}$ then $\T\phi_{1}\ge\T\phi_{2}$ (here and
after, $\phi_{1}\ge\phi_{2}$ means $\phi_{1}(x)\ge\phi_{2}(x)$ for
all $x$).
\end{enumerate}
Then there exists a symmetric linear transformation $B\in\text{GL}_{n}$
and $c>0$ such that either:
\selectlanguage{english}%
\begin{enumerate}
\item [(a)]$\T\phi=\phi^{\ast}\circ B$, where $\phi^{\ast}$ is the classic
Legendre transform of $\phi$, defined by\[
\phi^{\ast}(x)=\sup_{y\in\R^{n}}\left[\left\langle x,y\right\rangle -\phi(y)\right]\]

\end{enumerate}
or 
\begin{enumerate}
\item [(b)]$\T\phi=\left(c\cdot\phi^{\circ}\right)\circ B$, where $\phi^{\circ}$
is the new Polarity transform of $\phi$, defined by\[
\phi^{\circ}(x)=\begin{cases}
\sup_{\left\{ y\in\R^{n}:\ \phi(y)>0\right\} }\frac{\left\langle x,y\right\rangle -1}{\phi(y)} & x\in\left\{ \phi^{-1}(0)\right\} ^{\circ}\\
\infty & x\notin\left\{ \phi^{-1}(0)\right\} ^{\circ}.\end{cases}\]

\end{enumerate}
\end{thm*}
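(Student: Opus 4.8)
We outline the argument. Being an order-reversing involution, $\T$ is in particular an order-reversing bijection of the complete lattice $\cvxn$; hence it interchanges the least element $\o_{\R^{n}}^{\infty}\equiv0$ with the greatest element $\o_{\{0\}}^{\infty}$ and carries arbitrary pointwise suprema to (convex, lower-semicontinuous) infima and conversely. We shall also use the elementary facts that $\phi\mapsto\phi^{\ast}$ and $\phi\mapsto\phi^{\circ}$ are themselves order-reversing involutions of $\cvxn$, that $\left(c\phi\right)^{\circ}=\tfrac{1}{c}\phi^{\circ}$ and $\left(c\phi\right)^{\ast}(x)=c\,\phi^{\ast}(x/c)$ for $c>0$, and that on the indicator family $\mathcal{I}=\left\{ \o_{K}^{\infty}:K\in\Ko\right\} $ one has $\left(\o_{K}^{\infty}\right)^{\ast}=h_{K}$, the support function, while $\left(\o_{K}^{\infty}\right)^{\circ}=\o_{K^{\circ}}^{\infty}$. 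The plan has three stages: pin down $\T$ on $\mathcal{I}$ using the classification theorem for $\Ko$; show the only remaining freedom is composition with a scaling $\phi\mapsto c\phi$; and close the argument with the involution identity, which is where the dichotomy between (a) and (b) surfaces.

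The core of the proof --- and, I expect, the main obstacle --- is the following claim: $\T$ maps $\mathcal{I}$ either onto $\mathcal{I}$ or onto the family $\mathcal{H}$ of sublinear (positively $1$-homogeneous) members of $\cvxn$; note $\left(\cdot\right)^{\circ}$ preserves $\mathcal{I}$ while $\left(\cdot\right)^{\ast}$ exchanges $\mathcal{I}$ and $\mathcal{H}$, so both possibilities occur. Since $\cvxn$ has neither atoms nor coatoms, $\mathcal{I}$ cannot be isolated by a short order argument; one needs a description of it in terms of finer order data. A plausible route: first recognize $\o_{H}^{\infty}$ for half-spaces $H\ni0$ through the structure of the maximal chains and of the intervals through them; then characterize $\mathcal{I}$ as the set of all infima of such $\o_{H}^{\infty}$, and dually characterize $\mathcal{H}$. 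As these descriptions use only the order, they are inherited by the order-isomorphic image $\T(\mathcal{I})$, and one verifies that the only sub-posets of $\cvxn$ satisfying them are $\mathcal{I}$ and $\mathcal{H}$.

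Granting this, the quoted results take over. If $\T(\mathcal{I})=\mathcal{I}$, write $\T\o_{K}^{\infty}=\o_{\sigma(K)}^{\infty}$. Then $\sigma$ is an order-reversing bijection of $\Ko$, and it is an involution because $\T$ is; since $n\ge2$, the first theorem of the Introduction yields a symmetric $B_{0}\in\mathrm{GL}_{n}$ with $\sigma(K)=B_{0}K^{\circ}$, so $\T$ agrees on $\mathcal{I}$ with the model $\T_{0}\phi:=\phi^{\circ}\circ B_{0}^{-1}$, which is again an order-reversing involution of $\cvxn$. If instead $\T(\mathcal{I})=\mathcal{H}$, write $\T\o_{K}^{\infty}=h_{\sigma(K)}$; then $\sigma$ is an order-\emph{preserving} bijection of $\Ko$, hence by the analogous (and simpler) classification of order isomorphisms of $\Ko$ a linear map $\sigma(K)=AK$ with $A\in\mathrm{GL}_{n}$, so $\T$ agrees on $\mathcal{I}$ with the model $\T_{0}\phi:=\phi^{\ast}\circ A^{T}$. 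In either case set $S:=\T_{0}^{-1}\circ\T$, an order-\emph{preserving} bijection of $\cvxn$; a one-line computation with the formulas above shows that $S$ fixes every indicator $\o_{K}^{\infty}$.

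It remains to prove that an order isomorphism $S$ of $\cvxn$ fixing $\mathcal{I}$ pointwise is a scaling $S\phi=c\phi$, $c>0$; this is the second technical point. One reconstructs $S$ from its effect on the ramps $g_{a,b}(x)=\left(\left\langle a,x\right\rangle -b\right)^{+}$ --- every member of $\cvxn$ being a supremum of ramps --- as follows: the smallest element of $\mathcal{I}$ lying above $g_{a,b}$ is $\o_{\left\{ g_{a,b}=0\right\} }^{\infty}$, so $S$, fixing $\mathcal{I}$ and preserving order, preserves the zero-set of $g_{a,b}$; the only freedom left is the value of $S$ along the chains $\left\{ g_{a,b}\right\} _{b\ge0}$ and $\left\{ \lambda g_{a,0}\right\} _{\lambda\ge0}$, and comparing ramps of different directions (again using $n\ge2$) and the images of functions such as $|x|=\sup_{|a|\le1}g_{a,0}$ forces this value to be a single constant $c$. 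Granting $S\phi=c\phi$ we obtain $\T=\T_{0}\circ S$, i.e. $\T\phi=\left(c^{-1}\phi^{\circ}\right)\circ B_{0}^{-1}$ in the first case and $\T\phi=\bigl(c\,\phi^{\ast}(\cdot/c)\bigr)\circ A^{T}$ in the second. The former is already alternative (b), with $B=B_{0}^{-1}$ symmetric and $c>0$ arbitrary (and one checks it is automatically an involution). In the latter, $\T\T=\mathrm{id}$ together with $\left(c\phi\right)^{\ast}(x)=c\,\phi^{\ast}(x/c)$ forces $c=1$, after which $\T\T=\mathrm{id}$ forces $A$ symmetric, giving alternative (a) with $B=A$. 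The contrast --- $c$ free for polarity, $c=1$ forced for the Legendre transform --- is precisely the asymmetry in the statement.
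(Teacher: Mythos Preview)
This theorem is not proved in the present paper at all: it is quoted in the Introduction as a result of Artstein-Avidan and Milman, with a citation to \cite{artstein-avidan_hidden_2011}, and serves only to motivate the study of the transform $\phi\mapsto\phi^{\circ}$. The paper's own contributions begin with Proposition~\ref{pro:invariant-set} and culminate in Theorem~\ref{thm:main-thm}, which concern self-polar functions on the ray. There is therefore no ``paper's own proof'' of this statement to compare your proposal against.

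Regarding the proposal on its own merits: your three-stage architecture (restrict $\T$ to the indicator family $\mathcal{I}$ and invoke the $\Ko$ classification; show that the residual order automorphism fixing $\mathcal{I}$ is a scaling $\phi\mapsto c\phi$; use the involution identity to pin down the constants) is indeed the skeleton of the argument in the cited Artstein-Avidan--Milman paper. But you yourself identify, and do not resolve, the two substantive steps: (1) that $\T(\mathcal{I})$ must equal either $\mathcal{I}$ or the sublinear class $\mathcal{H}$, and (2) that an order automorphism of $\cvxn$ fixing $\mathcal{I}$ pointwise is necessarily a scalar multiple. Your ``plausible route'' for (1) via maximal chains and half-space indicators is suggestive but not an argument; in the original, isolating these subclasses order-theoretically requires a careful analysis of how affine and sublinear functions sit inside $\cvxn$ and is the bulk of the work. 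Likewise your sketch for (2) (``comparing ramps of different directions \ldots forces this value to be a single constant $c$'') names the right test functions but omits the actual comparison. So what you have is a correct and informed outline, not a proof.
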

Even though we have two essentially different order reversing involutions,
only the polarity transform extends the classical notion of duality,
in the sense that\[
\left(\o_{K}^{\infty}\right)^{\circ}=\o_{K^{\circ}}^{\infty}.\]
 Therefore it makes sense to think about $\phi^{\circ}$ as the polar
function to $\phi$. 

Once we have extended the definition of polarity to convex functions,
we want to extend our theorems as well. A functional version of the
Blaschke-Santal\'{o} inequality was proven by Ball in \citet{ball_isometric_1986}:
It follows from his work that if $\phi\in\cvxn$ is an even function
(i.e. $\phi(x)=\phi\left(-x\right)$), then \[
\int_{\R^{n}}e^{-\phi(x)}dx\cdot\int_{\R^{n}}e^{-\phi^{\ast}(x)}dx\le\left(\int_{\R^{n}}e^{-\frac{\left|x\right|^{2}}{2}}dx\right)^{2}=\left(2\pi\right)^{n}.\]
Again, there is a generalization for the non-even case, proven by
Artstein-Avidan, Klartag and Milman in \citet{artstein-avidan_santalo_2004}.
In the same paper is it also shown that if $\phi$ is a maximizer
of the Santal\'{o} product, then, up to a linear transformation,
we must have $\phi=\frac{\left|x\right|^{2}}{2}$. Since one can easily
check that $\left(\frac{\left|x\right|^{2}}{2}\right)^{\ast}=\frac{\left|x\right|^{2}}{2}$
and $\frac{\left|x\right|^{2}}{2}$ is the only function with this
property, we get once again that the maximizer in the Santal\'{o}
inequality is the unique self-dual function.

Rather surprisingly, the above mentioned theorem seems to use the
{}``wrong'' notion of polarity. It would be interesting have an
analogous theorem for $\phi^{\circ}$, that is to find the maximizer
of \[
\int_{\R^{n}}e^{-\phi(x)}dx\cdot\int_{\R^{n}}e^{-\phi^{\circ}(x)}dx.\]
 Given the classical and the functional Santal\'{o} inequalities,
it makes sense to conjecture that the maximizer here will be self-polar
as well, that is $\phi=\phi^{\circ}$. An independent argument by
Artstein-Avidan (\citet{artstein-avidan_steiner_????}) proves that
the maximizer must be rotationally invariant, i.e. of the form $\phi(x)=\rho(\left|x\right|)$
for a convex function $\rho:[0,\infty)\to[0,\infty]$. Therefore we
are naturally led to the following question: what are all the self-polar,
rotationally invariant convex functions? 

In order to answer this question we first follow \citet{artstein-avidan_hidden_2011}
and observe that if $\phi(x)=\rho\left(\left|x\right|\right)$ then
$\phi^{\circ}(x)=\rho^{\circ}(\left|x\right|)$, where $\rho^{\circ}$
is the polarity transform of $\rho$ on the ray (see the next section
for an exact definition). Therefore it is enough to find all self-polar
functions $\phi:[0,\infty)\to[0,\infty]$. An infinite family of such
functions is easy to present: For every $1\le p\le\infty$ the function\[
\phi_{p}(x)=\sqrt{\frac{\left(p-1\right)^{p-1}}{p^{p}}}\cdot x^{p}\]
 is self-polar. Our main result in this paper is that there are, in
fact, many other 1-dimensional self-polar functions. Specifically,
Theorem \ref{thm:main-thm} provides a complete characterization of
self-polar functions on the ray.

Unfortunately, because the set of self-polar functions is so big,
finding the Santal\'{o} maximizer inside this set seems rather intractable
at the moment, and the original question we started with remains open.
Nevertheless, we believe the results presented here are of independent
interest, and might have applications in several directions.

\selectlanguage{american}%

\section{Self-polar functions on the ray }

Let \foreignlanguage{english}{$\cvx$ be the class of all convex,
lower semicontinuous functions $\phi:[0,\infty)\to[0,\infty]$ satisfying
$\phi(0)=0$}. For a function $\phi\in\cvx$, we define its polar
$\phip$ as\foreignlanguage{english}{\[
\phip(x)=\sup_{y>0}\frac{xy-1}{\phi(y)}.\]
}

\selectlanguage{english}%
The division in the definition is formal, in the sense that $\phi(y)$
may be equal to $0$. We remedy the situation by defining $"\frac{+}{0}=\infty"$
and $"\frac{-}{0}=0"$. Put differently, we define $\phi^{\circ}(x)=\infty$
whenever there exists a $y\in\R^{+}$ such that $\phi(y)=0$ and $xy-1>0$
(or, in other words, whenever $x\notin\left\{ \phi^{-1}(0)\right\} ^{\circ}$). 

Just like in the $n$-dimensional case, polarity on the ray is also
an order reversing involution. Our main goal is to characterize all
functions $\phi\in\cvx$ such that $\phi=\phip$. 
\begin{defn}
Denote by $F$ the concave function $F(x)=\sqrt{x^{2}-1}$ (defined
for $x\ge1$). For $1\le q<\infty$, we define \[
T_{q}=\left\{ \phi\in\cvx:\ \begin{array}{l}
\phi(x)\ge F(x)\text{ for all }x\ge1\\
\phi(q)=F(q)\end{array}\right\} .\]
In other words, $T_{q}$ is the set of functions which are tangent
to $F$ at $q$. For $q=\infty$ we define \[
T_{\infty}=\left\{ \phi\in\cvx:\ \begin{array}{l}
\phi(x)\ge F(x)\text{ for all }x\ge1\\
\lim_{x\to\infty}\left(\phi(x)-F(x)\right)=0\end{array}\right\} .\]

\end{defn}
The classes $T_{1}$ and $T_{\infty}$ will be exceptional and somewhat
trivial. In fact, let us define the following:
\begin{defn}
For $\beta\in\R$ we define a function $\ell_{\beta}\in\cvx$ by $\ell_{\beta}(x)=\beta x$.
In other words, $\ell_{\beta}$ is the line through the origin with
slope $\beta$.
\end{defn}
Using this definition, it is not hard to see that $T_{1}=\left\{ \o_{[0,1]}^{\infty}\right\} $,
and $T_{\infty}=\left\{ \ell_{1}\right\} $. For $1<q<\infty$, the
class $T_{q}$ is infinite.

Our first proposition will explain the importance of these classes
when characterizing self-\foreignlanguage{american}{polar} functions:
\begin{prop}
\label{pro:invariant-set}\end{prop}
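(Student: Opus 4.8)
The claim is that each class $T_q$ is invariant under the polarity transform: $\phi\in T_q$ if and only if $\phi^{\circ}\in T_q$, so that $\phi\mapsto\phi^{\circ}$ restricts to an involution of every $T_q$, and consequently every self-polar function lies in a unique $T_q$. The plan is as follows.

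First dispose of $q=1$ and $q=\infty$ by hand: since $T_1=\{\o_{[0,1]}^{\infty}\}$ and $T_{\infty}=\{\ell_{1}\}$, it suffices to check from the definition that these two functions are self-polar, i.e. $(\o_{[0,1]}^{\infty})^{\circ}=\o_{[0,1]}^{\infty}$ (using $\{(\o_{[0,1]}^{\infty})^{-1}(0)\}^{\circ}=[0,1]$ and the conventions for division by $0$) and $\ell_{1}^{\circ}(x)=\sup_{y>0}\frac{xy-1}{y}=\sup_{y>0}\bigl(x-\frac1y\bigr)=x$.

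For $1<q<\infty$ everything goes through the line $\ell(x)=\frac{qx-1}{\sqrt{q^{2}-1}}$. A one-line computation ($F'(x)=x/\sqrt{x^{2}-1}$, so $F(q)+F'(q)(x-q)=\frac{qx-1}{\sqrt{q^{2}-1}}$) identifies $\ell$ as the tangent line to $F$ at $q$; hence $\ell\ge F$ on $[1,\infty)$ by concavity of $F$, and for any $\phi\in T_q$ we also have $\phi\ge\ell$ on $[0,\infty)$, because $\phi\ge F$ together with $\phi(q)=F(q)$ forces $F'(q)$ to lie between the one-sided derivatives of the convex function $\phi$ at $q$, so $\ell$ is a supporting line of $\phi$. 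Now fix $\phi\in T_q$; by the involution property it is enough to prove $\phi^{\circ}\in T_q$, and $\phi^{\circ}\in\cvx$ is already known. Testing $y=q$ in the definition of $\phi^{\circ}$ and using $\phi(q)=\sqrt{q^{2}-1}>0$ gives $\phi^{\circ}(x)\ge\frac{xq-1}{\sqrt{q^{2}-1}}=\ell(x)\ge F(x)$ for every $x\ge1$, which is the first requirement of $T_q$, and in particular $\phi^{\circ}(q)\ge\ell(q)=F(q)$. For the reverse bound on $\phi^{\circ}(q)=\sup_{y>0}\frac{qy-1}{\phi(y)}$: when $y>1/q$ we have $\phi(y)\ge\ell(y)>0$, hence $\frac{qy-1}{\phi(y)}\le\frac{qy-1}{\ell(y)}=\sqrt{q^{2}-1}$; when $y\le1/q$ the numerator is non-positive and (again since $\phi\ge\ell$, so $\phi^{-1}(0)\subseteq[0,1/q]$) the quotient is $\le0$. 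Thus $\phi^{\circ}(q)=\sqrt{q^{2}-1}=F(q)$ and $\phi^{\circ}\in T_q$.

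For the connection with self-polarity: if $\phi=\phi^{\circ}$ then $\phi^{\circ}(x)\ge\frac{xy-1}{\phi(y)}$ yields $\phi(x)\phi(y)\ge xy-1$ for all $x,y>0$, and $y=x\ge1$ gives $\phi(x)^{2}\ge x^{2}-1$, i.e. $\phi\ge F$ on $[1,\infty)$; a further argument, exploiting that in the supremum defining $\phi^{\circ}$ each $x$ has a mutually optimal partner $y$, shows that $\phi$ must in addition touch $F$ at some finite $q>1$, be asymptotic to $F$ at infinity, or equal $\o_{[0,1]}^{\infty}$, putting $\phi$ in exactly one $T_q$ (uniqueness holding because a convex function cannot agree with the strictly concave $F$ at two points of $(1,\infty)$). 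I expect this last step, together with the boundary bookkeeping — the borderline value $y=1/q$, functions that vanish on a subinterval, and the vertical tangent of $F$ at $1$ that forces $q=1$ to be treated apart — to be the only delicate part; the core of the argument is just the tangent-line estimate above.
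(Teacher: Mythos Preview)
Your proof of part (i) is correct and takes a genuinely different route from the paper. The paper sandwiches every $\phi\in T_q$ between two explicit functions $\psi_L=\o_{[0,1/q]}^{\infty}\wedge\ell_{F'(q)}$ and $\psi_U=\o_{[0,q]}^{\infty}\vee\ell_{F(q)/q}$, verifies the lattice identity $\psi_L^{\circ}=\psi_U$, and concludes by order-reversal. You instead work directly with the single tangent line $\ell(x)=\frac{qx-1}{\sqrt{q^{2}-1}}$: testing $y=q$ gives $\phi^{\circ}\ge\ell\ge F$, and the pointwise bound $\phi\ge\ell$ lets you squeeze the supremum at $x=q$ down to $F(q)$. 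Your argument is more elementary (no lattice calculus, no second auxiliary function) and perhaps more transparent about \emph{why} the tangent point is preserved; the paper's approach has the virtue of making the invariance of the whole class $T_q$ a one-line consequence of a duality identity between two extremal elements.

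Part (ii), however, has a genuine gap. After obtaining $\phi\ge F$ on $[1,\infty)$ you write that ``a further argument, exploiting that in the supremum defining $\phi^{\circ}$ each $x$ has a mutually optimal partner $y$, shows that $\phi$ must in addition touch $F$\ldots'', and later concede that you ``expect this last step\ldots to be the only delicate part''. This is precisely the content of the proposition, and it is not supplied: you give no mechanism that rules out $\phi$ staying strictly above $F$ everywhere without being asymptotic. The existence of an optimal $y$ for each $x$ is not automatic (the supremum need not be attained), and even granting it, you have not explained how mutual optimality forces $\phi(x)\phi(y)=xy-1$ to collapse to $\phi(x)^2=x^2-1$ at some point. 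The paper closes this gap with a separation argument: if $d\bigl(\mathrm{epi}(\phi),\mathrm{hyp}(F)\bigr)>0$, a strictly separating line of slope $\beta$ meeting the $x$-axis at $a$ yields $\psi=\o_{[0,a]}^{\infty}\wedge\ell_{\beta}$ with $\psi\le\phi=\phi^{\circ}\le\psi^{\circ}$; evaluating at $x=1/a$ forces $\beta^{2}(1-a^{2})\le 1$, while the fact that the line lies strictly above $F$ forces $\beta^{2}(1-a^{2})>1$, a contradiction. You should either carry out your optimal-partner idea in full or adopt a separation argument of this kind.
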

\begin{enumerate}
\item If $\phi\in T_{q}$ for some $1\le q\le\infty$, then $\phip\in T_{q}$. 
\item If $\phi=\phip$, then $\phi\in T_{q}$ for some $1\le q\le\infty$. \end{enumerate}
\begin{proof}
(i) If $q=1,\infty$ this is trivial by the above comment. For $1<q<\infty$
define two convex functions $\psi_{L}$ and $\psi_{U}$ as\begin{eqnarray*}
\psi_{L} & = & \o_{\left[0,\frac{1}{q}\right]}^{\infty}\wedge\ell_{F'(q)}\\
\psi_{U} & = & \o_{[0,q]}^{\infty}\vee\ell_{F(q)/q.}\end{eqnarray*}
Here and after, $\vee$ and $\wedge$ will denote supremum and infimum
in the lattice $\cvx$. In other words, $\phi_{1}\vee\phi_{2}=\max\left(\phi_{1},\phi_{2}\right)$,
and $\phi_{1}\wedge\phi_{2}$ is the biggest function in $\cvx$ which
is smaller than $\min\left(\phi_{1},\phi_{2}\right)$. To illustrate
these definitions we plot the graphs of $\psi_{L}$ and $\psi_{U}$: 

\noindent \begin{center}
\includegraphics[scale=0.65]{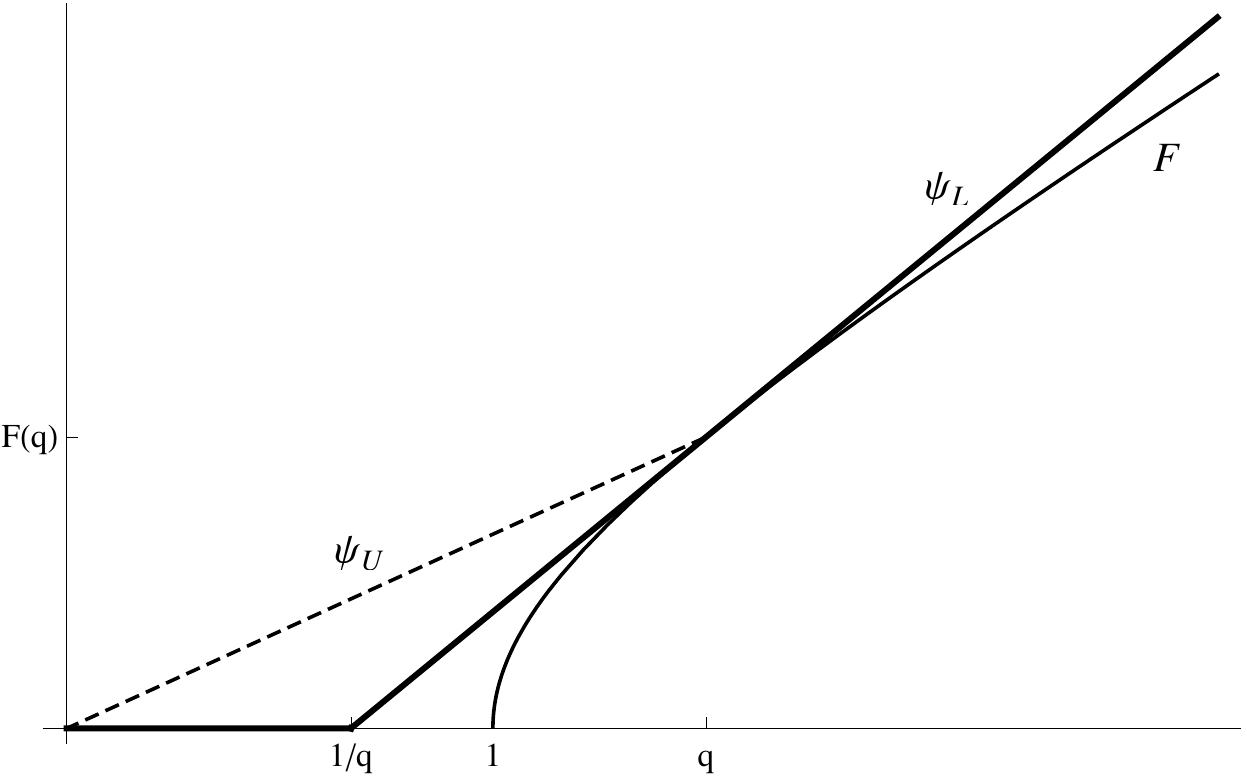}
\par\end{center}

It is clear that $\phi\in T_{q}$ if and only if $\psi_{L}\le\phi\le\psi_{U}$.
Since polarity is order reversing, we get that if $\phi\in T_{q}$
then $\psi_{U}^{\circ}\le\phip\le\psi_{L}^{\circ}$. But \[
\psi_{L}^{\circ}=\left(\o_{\left[0,\frac{1}{q}\right]}^{\infty}\right)^{\circ}\vee\left(\ell_{F'(q)}\right)^{\circ}=\o_{[0,q]}^{\infty}\vee\ell_{1/F'(q)}=\psi_{U},\]
 and thus $\psi_{U}^{\circ}=\psi_{L}$ , so $\phip\in T_{q}$ as well.

(ii) First notice that if $\phi=\phip$ then for every $x\ge0$ we
have\[
\phi(x)=\phip(x)=\sup_{y>0}\frac{xy-1}{\phi(y)}\ge\frac{x^{2}-1}{\phi(x)},\]
 and if $x\ge1$ this implies $\phi(x)\ge F(x)$. 

Define convex sets in $\left(\R^{+}\right)^{2}$ by \begin{eqnarray*}
C_{1} & = & epi(\phi)=\left\{ (x,y)\in\left(\R^{+}\right)^{2}:\ y\ge\phi(x)\right\} \\
C_{2} & = & hyp(F)=\left\{ (x,y)\in\left(\R^{+}\right)^{2}:\ y\le F(x)\right\} .\end{eqnarray*}
 If $d\left(C_{1},C_{2}\right)=0$ then $\phi\in T_{q}$ for some
$q$ (which can be $\infty$), so we will assume by contradiction
that $d(C_{1},C_{2})>0$. This means that there is a line $\ell$
strictly separating $C_{1}$ and $C_{2}$ (see, e.g. Theorem 11.4
in \citet{rockafellar_convex_1970}). Denote by $\beta$ the slope
of $\ell$ and by $a$ the intersection of $\ell$ and the $x-$axis.
Define $\psi=\o_{[0,a]}^{\infty}\wedge\ell_{\beta}$:

\noindent \begin{center}
\includegraphics[scale=0.65]{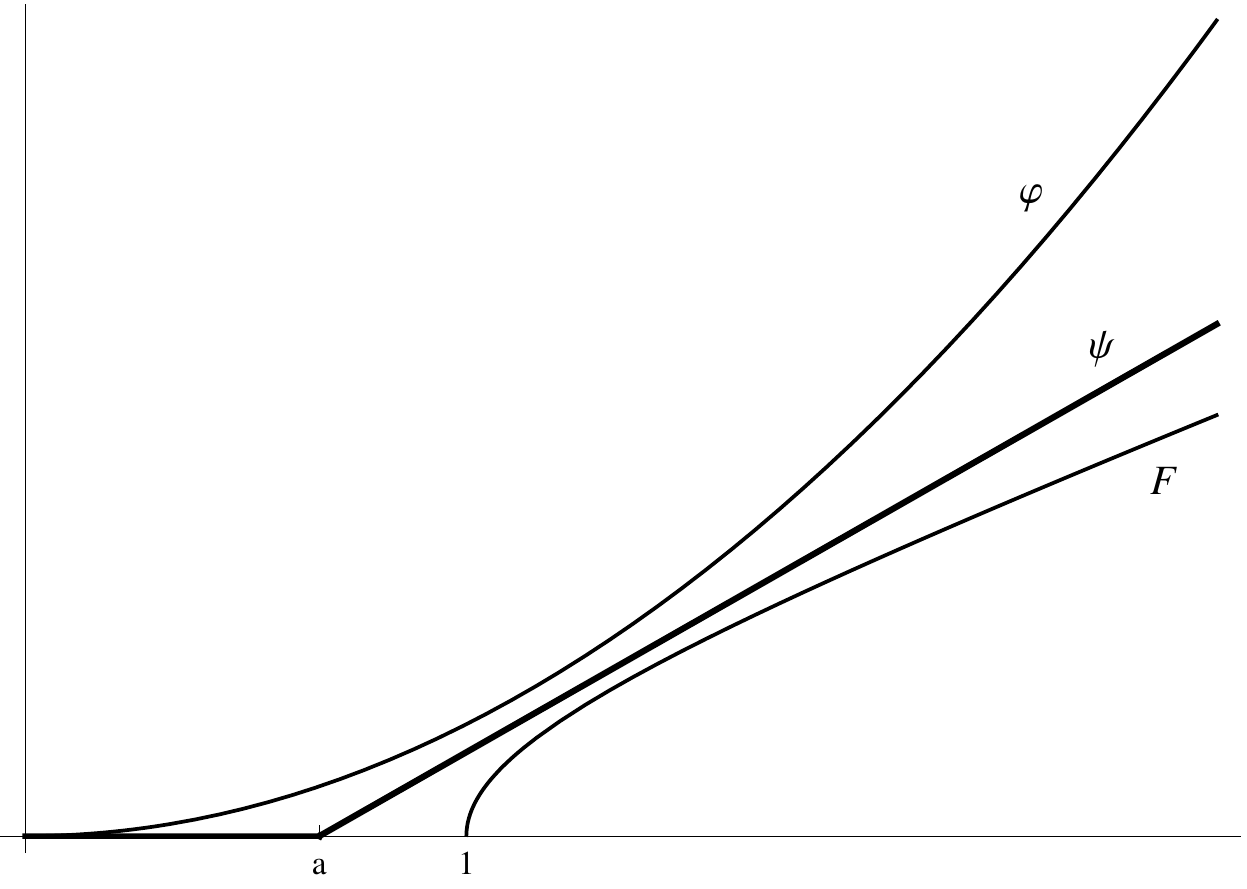}
\par\end{center}

On the one hand, we know that\[
\psi\le\phi=\phip\le\psi^{\circ}=\o_{[0,a^{-1}]}^{\infty}\vee\ell_{\beta^{-1}},\]
 so in particular $\psi\left(\frac{1}{a}\right)\le\psi^{\circ}\left(\frac{1}{a}\right)$.
More explicitly, this means\[
\beta\left(\frac{1}{a}-a\right)\le\frac{1}{\beta}\cdot\frac{1}{a},\]
 or $\beta^{2}\left(1-a^{2}\right)\le1$. 

On the other hand, it is easy to compute that the tangent to $F$
passing through $(a,0)$ has slope $\frac{1}{\sqrt{1-a^{2}}}$. Since
$\psi>F$ we must have $\beta>\frac{1}{\sqrt{1-a^{2}}}$, or $\beta^{2}(1-a^{2})>1$.
This is a contradiction, so no such $\psi$ can exist and $\phi\in T_{q}$
for some $1\le q\le\infty$. 
\end{proof}
Our next goal is to explain how to construct fixed points inside $T_{q}$
for $1<q<\infty$. To do so we will need the following proposition,
which shows that in order to calculate $\phip(x)$ we don't need to
know $\phi(y)$ for all possible values of $y$: 
\begin{prop}
\label{pro:polar-break}Assume $\phi_{1},\phi_{2}\in T_{q}$ for some
$1<q<\infty$.\end{prop}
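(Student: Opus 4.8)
The plan is to show that for any $\phi\in T_{q}$ the number $\phi^{\circ}(x)$ depends only on $\phi|_{[0,q]}$ when $x\ge q$, and only on $\phi|_{[q,\infty)}$ when $x\le q$; since $\phi_{1}$ and $\phi_{2}$ agree on one of these half-lines, the corresponding halves of $\phi_{1}^{\circ}$ and $\phi_{2}^{\circ}$ must then coincide. The geometric fact behind this is that, whenever $0<\phi^{\circ}(x)<\infty$, one has $\phi^{\circ}(x)=x/m(x)$ with $m(x)=\inf_{y>1/x}\frac{\phi(y)}{y-1/x}$, the slope of the steepest line through $(1/x,0)$ that stays weakly below the graph of $\phi$; by convexity this infimum is attained at a contact point $y_{0}(x)$, and $m(x)$ is the slope of a supporting line of $\phi$ at $y_{0}(x)$. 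I would first dispose of the degenerate values: every $\phi\in T_{q}$ satisfies $\psi_{L}\le\phi\le\psi_{U}$ with the $\psi_{L},\psi_{U}$ from the proof of Proposition \ref{pro:invariant-set}, so its zero set lies in $[0,1/q]$ and it is finite on $[0,q]$; consequently $\phi^{\circ}(x)=\infty$ exactly for $x$ beyond the reciprocal of the right endpoint of the zero set (a quantity read off from $\phi|_{[0,q]}$), while $\phi^{\circ}(x)=0$ exactly for $x<1/c$, where $c=\sup\{y:\phi(y)<\infty\}\ge q$ is read off from $\phi|_{[q,\infty)}$. In particular $\phi^{\circ}(x)\in(0,\infty]$ for $x\ge q$ and $\phi^{\circ}(x)\in[0,\infty)$ for $x\le q$, so in each relevant range the degenerate cases are already settled by the agreement hypothesis.

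The heart of the argument is locating $y_{0}(x)$. I would first check $y_{0}(q)=q$: since $\phi-F$ is convex, nonnegative and vanishes at $q$, we get $F'(q)\in\partial\phi(q)$, so the line of slope $F'(q)$ through $(q,F(q))$ supports $\phi$ from below; using $F(q)/F'(q)=(q^{2}-1)/q$ one computes that this line meets the horizontal axis precisely at $1/q$, which identifies it as the steepest line below $\phi$ through $(1/q,0)$, with contact point $q$. Next I would prove that $x\mapsto y_{0}(x)$ is non-increasing: the horizontal intercept of the supporting line of $\phi$ at $y$, equal to $y-\phi(y)/\phi'(y)$ in the differentiable case and having derivative $\phi\phi''/(\phi')^{2}\ge 0$, is a non-decreasing function of $y$ (the same holding with one-sided derivatives/subgradients in general), while that intercept equals $1/x$ at $y=y_{0}(x)$. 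Combining the two facts, $x\ge q$ forces $y_{0}(x)\le q$ and $x\le q$ forces $y_{0}(x)\ge q$.

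Finally I would assemble the pieces. Take $x\ge q$ with $\phi^{\circ}(x)<\infty$, put $y_{0}=y_{0}(x)\le q$ and $m=m(x)=\phi(y_{0})/(y_{0}-1/x)$; then $y\mapsto m(y-1/x)$ lies below $\phi$ and touches it at $y_{0}$, and $\phi^{\circ}(x)=x/m$. If $\phi_{1},\phi_{2}\in T_{q}$ agree on $[0,q]$ they share the value and the one-sided subgradients of $\phi$ at $y_{0}$, so this same line lies below $\phi_{2}$ and touches it at $y_{0}$; hence $m$ is also the infimum defining $m(x)$ for $\phi_{2}$, and $\phi_{1}^{\circ}(x)=\phi_{2}^{\circ}(x)$. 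Together with the already-noted agreement of the $\infty$-sets this gives $\phi_{1}^{\circ}=\phi_{2}^{\circ}$ on $[q,\infty)$, and the case $x\le q$ is symmetric, using $y_{0}(x)\ge q$ and the agreement of the $0$-sets. The step I expect to cost the most care is making $y_{0}(q)=q$ and the monotonicity of $y_{0}(x)$ fully rigorous when $\phi$ has corners — there "contact point" must be read as "contact interval", derivatives replaced by one-sided ones — and checking that nothing pathological occurs when the optimal slope is the asymptotic slope of $\phi$ (so that $y_{0}=\infty$, which for $x\le q$ only involves $\phi|_{[q,\infty)}$ and for $x\ge q$ cannot occur since $y_{0}(x)\le q$); the underlying one-dimensional convexity computations are routine.
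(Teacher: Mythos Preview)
Your argument is correct in substance and the flagged corner cases can indeed be handled as you indicate (the only delicate moment is when $y_0(x)=q$: there $\phi_1$ and $\phi_2$ need not share the \emph{right} derivative, but the optimal slope $m$ satisfies $m\le F'(q)$, and since $F'(q)\in\partial\phi_2(q)$ for every member of $T_q$, the line still supports $\phi_2$).  The route, however, is genuinely different from the paper's.

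The paper never locates the maximizer in the definition of $\phi^\circ(x)$.  Instead it argues purely order-theoretically: from $\phi|_{[0,q]}$ alone one builds two functions $\psi_U=\phi\vee\o_{[0,q]}^\infty$ and $\psi_L=(\phi\vee\o_{[0,q]}^\infty)\wedge\ell_{F'(q)}$ with $\psi_L\le\phi_i\le\psi_U$; taking polars gives $\psi_U^\circ\le\phi_i^\circ\le\psi_L^\circ$, and a short lattice computation (using the known polars of indicator functions and of rays, together with $\phi_i^\circ\in T_q$) shows that $\psi_U^\circ$ and $\psi_L^\circ$ coincide on $[q,\infty)$, which squeezes $\phi_1^\circ$ and $\phi_2^\circ$ to the same value there.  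Your approach, by contrast, analyzes the supremum directly: you rewrite $\phi^\circ(x)=x/m(x)$, identify $m(x)$ as the slope of the steepest sub-tangent through $(1/x,0)$, and use the monotonicity of the horizontal-intercept map $y\mapsto y-\phi(y)/\phi'(y)$ to pin the contact point to the correct side of $q$.  The paper's proof is shorter and sidesteps all smoothness and attainment issues by working in the lattice $\cvx$; yours is more explicit, yields the extra information that the supremum defining $\phi^\circ(x)$ is attained at a point $y_0(x)$ on the opposite side of $q$ from $x$, and would be the natural line if one wanted quantitative control of the maximizer rather than just equality of the polar values.
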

\begin{enumerate}
\item If $\phi_{1}(x)=\phi_{2}(x)$ for all $x\le q$, then $\phi_{1}^{\circ}(x)=\phi_{2}^{\circ}(x)$
for all $x\ge q$. 
\item If $\phi_{1}(x)=\phi_{2}(x)$ for all $x\ge q$, then $\phi_{1}^{\circ}(x)=\phi_{2}^{\circ}(x)$
for all $x\le q$.\end{enumerate}
\begin{proof}
We will prove (i), and the proof of (ii) is analogous.

Define convex functions $\psi_{L}$ and $\psi_{U}$ by \begin{eqnarray*}
\psi_{U} & = & \phi\vee\o_{[0,q]}^{\infty}\\
\psi_{L} & = & \left(\phi\vee\o_{[0,q]}^{\infty}\right)\wedge\ell_{F'(q),}\end{eqnarray*}
where $\phi$ is either $\phi_{1}$ or $\phi_{2}$ -- the definitions
remain the same regardless of this choice. We claim that $\psi_{L}\le\phi_{i}\le\psi_{U}$
for $i=1,2$. The right inequality is obvious. For the left inequality,
notice that $\phi_{i}$ and $\psi_{L}$ coincide for $x\le q$. For
$x\ge q$ the function $\psi_{L}$ grows linearly, and in fact is
exactly the tangent line to $F$ at the point $q$. Since $\phi_{i}\in T_{q}$
we get that $\psi_{L}$ is also a tangent for $\phi_{i}$, and because
$\phi_{i}$ is convex we must have $\phi_{i}\ge\psi_{L}$:

\noindent \begin{center}
\includegraphics[scale=0.65]{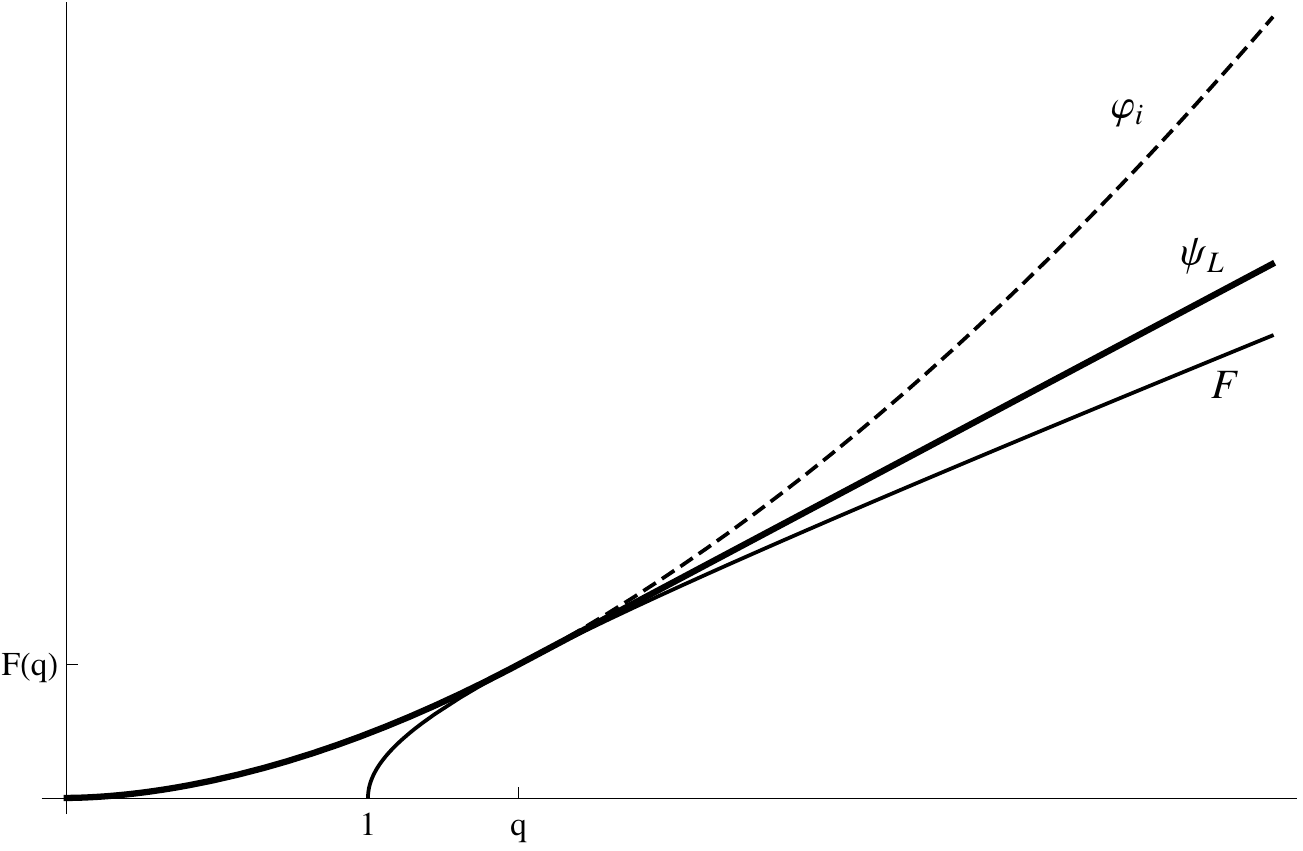}
\par\end{center}

Since polarity is order reversing we get that $\psi_{U}^{\circ}\le\phi_{i}^{\circ}\le\psi_{L}^{\circ}$,
so all we need to show is that $\psi_{U}^{\circ}(x)=\psi_{L}^{\circ}(x)$
for every $x\ge q$. A direct computation yields\begin{eqnarray*}
\psi_{U}^{\circ} & = & \phi_{1}^{\circ}\wedge\o_{[0,q^{-1}]}^{\infty}\\
\psi_{L}^{\circ} & = & \left(\phi_{1}^{\circ}\wedge\o_{[0,q^{-1}]}^{\infty}\right)\vee\ell_{F'(q)^{-1},}\end{eqnarray*}
so we need to show that if $x\ge q$ then\[
\frac{x}{F'(q)}\le\left(\phi_{1}^{\circ}\wedge\o_{[0,q^{-1}]}^{\infty}\right)(x).\]

By Proposition \ref{pro:invariant-set} we know that $\phi_{1}^{\circ}\in T_{q}$.
In particular, the tangent line to $F$ at $q$ is also a tangent
line for $\phi_{1}^{\circ}$. But an easy computation shows that this
line passes through $\left(q^{-1},0\right)$, so we know how $\psi_{U}^{\circ}$
looks:

\noindent \begin{center}
\includegraphics[scale=0.65]{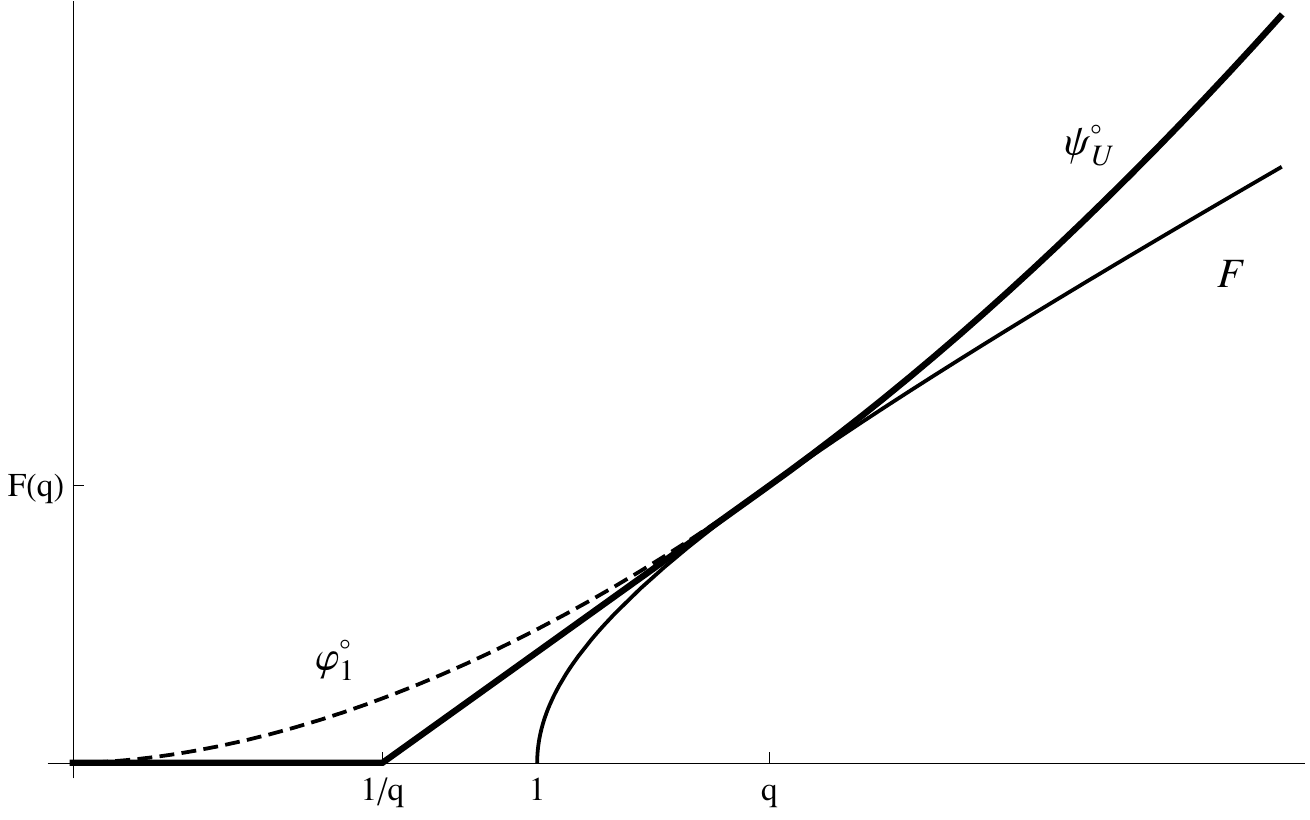}
\par\end{center}

In particular, wee see that if $x\ge q$ then \[
\left(\phi_{1}^{\circ}\wedge\o_{[0,q^{-1}]}^{\infty}\right)(x)=\phi_{1}^{\circ}(x).\]
 Finally, since $\phi_{1}^{\circ}$ is convex, we know that for every
$x\ge q$\[
\frac{\phi_{1}^{\circ}(x)}{x}\ge\frac{\phi_{1}^{\circ}(q)}{q}=\frac{F(q)}{q}=\frac{1}{F'(q)},\]
which implies\[
\frac{x}{F'(q)}\le\phi_{1}^{\circ}(x)\]
 like we wanted. 
\end{proof}
Using the last two propositions it is easy to give a complete characterization
of 1-dimensional self-polar convex functions:
\begin{thm}
\label{thm:main-thm}For every $\phi\in T_{q}$ define \[
\widetilde{\phi}(x)=\begin{cases}
\phi(x) & x\le q\\
\phi^{\circ}(x) & x\ge q.\end{cases}\]
 Then $\widetilde{\phi}$ is self-polar, and any self-polar function
is of the form $\widetilde{\phi}$ for some $\phi$. \end{thm}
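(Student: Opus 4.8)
The plan is to verify, in order: (a) that $\widetilde{\phi}$ is a well-defined element of $\cvx$, in fact of $T_q$; (b) that $\widetilde{\phi}^{\circ}=\widetilde{\phi}$, via two applications of Proposition \ref{pro:polar-break}; and (c) that, conversely, every self-polar function arises this way, via Proposition \ref{pro:invariant-set}(ii). First I would dispose of the cases $q=1$ and $q=\infty$: there $T_q$ is a singleton ($\{\o_{[0,1]}^{\infty}\}$ when $q=1$, $\{\ell_1\}$ when $q=\infty$), its unique element is self-polar, and the defining formula for $\widetilde{\phi}$ reduces to the identity (the second branch is empty when $q=\infty$, and the two branches coincide when $q=1$), so both assertions are immediate. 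From now on fix $1<q<\infty$.

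For step (a): by Proposition \ref{pro:invariant-set}(i) we have $\phi^{\circ}\in T_q$, so $\phi(q)=F(q)=\phi^{\circ}(q)$ and the two branches of $\widetilde{\phi}$ agree at $x=q$; moreover $\widetilde{\phi}(0)=\phi(0)=0$, and $\widetilde{\phi}$ is lower semicontinuous since each branch is and they match at $q$. The only nontrivial point is convexity across the seam $x=q$. Since any $\psi\in T_q$ satisfies $\psi\ge F$ on $[1,\infty)$ with $\psi(q)=F(q)$, comparing the difference quotients $\frac{F(q)-\psi(x)}{q-x}$ for $x\in[1,q)$ and $\frac{\psi(x)-F(q)}{x-q}$ for $x>q$ with the corresponding quotients of $F$ shows that the tangent line to $F$ at $q$ supports $\psi$ at $q$, i.e.\ $\psi'_-(q)\le F'(q)\le\psi'_+(q)$. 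Applying this to $\psi=\phi$ and to $\psi=\phi^{\circ}$ gives $\phi'_-(q)\le F'(q)\le(\phi^{\circ})'_+(q)$; since each branch of $\widetilde{\phi}$ is convex and the left derivative of $\widetilde{\phi}$ at $q$ does not exceed its right derivative there, $\widetilde{\phi}$ is convex on $[0,\infty)$. Finally $\widetilde{\phi}\ge F$ on $[1,\infty)$ with equality at $q$, because both branches have this property, so $\widetilde{\phi}\in T_q$.

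For step (b): the functions $\widetilde{\phi}$ and $\phi$ both lie in $T_q$ and agree on $[0,q]$, so Proposition \ref{pro:polar-break}(i) yields $\widetilde{\phi}^{\circ}=\phi^{\circ}$ on $[q,\infty)$; but $\widetilde{\phi}=\phi^{\circ}$ there by definition, hence $\widetilde{\phi}^{\circ}=\widetilde{\phi}$ on $[q,\infty)$. Symmetrically, $\widetilde{\phi}$ and $\phi^{\circ}$ both lie in $T_q$ and agree on $[q,\infty)$, so Proposition \ref{pro:polar-break}(ii) yields $\widetilde{\phi}^{\circ}=(\phi^{\circ})^{\circ}=\phi$ on $[0,q]$, polarity on the ray being an involution; and $\widetilde{\phi}=\phi$ there, hence $\widetilde{\phi}^{\circ}=\widetilde{\phi}$ on $[0,q]$. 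Combining the two ranges gives $\widetilde{\phi}^{\circ}=\widetilde{\phi}$. For step (c): if $\psi=\psi^{\circ}$, then Proposition \ref{pro:invariant-set}(ii) gives $\psi\in T_q$ for some $q\in[1,\infty]$, and taking $\phi=\psi$ we have $\widetilde{\phi}=\psi$ on $[0,q]$ by definition and $\widetilde{\phi}=\psi^{\circ}=\psi$ on $[q,\infty)$, so $\psi=\widetilde{\psi}$.

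All the individual steps are short once Propositions \ref{pro:invariant-set} and \ref{pro:polar-break} are available; the one place that demands a little care is checking convexity of the glued function $\widetilde{\phi}$ at the joining point $q$, and that is exactly where the common-tangent-line structure of the families $T_q$ enters.
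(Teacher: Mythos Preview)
Your proof is correct and follows essentially the same approach as the paper: establish convexity of $\widetilde{\phi}$ via the common tangent line to $F$ at $q$, apply Proposition~\ref{pro:polar-break} twice (once comparing with $\phi$, once with $\phi^{\circ}$) to get self-polarity, and use Proposition~\ref{pro:invariant-set}(ii) for the converse. You supply more detail than the paper---particularly in handling the boundary cases $q=1,\infty$ and in verifying convexity at the seam---but the argument is structurally identical.
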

\begin{proof}
Since both $\phi$ and $\phip$ are tangent to $F$ at $q$, the function
$\widetilde{\phi}$ is indeed convex. Using Proposition \ref{pro:polar-break}
twice we see that $\widetilde{\phi}$ is self-polar (compare once
with $\phi$, and once with $\phip$). In the other direction, if
$\phi$ is self-polar then by Proposition \ref{pro:invariant-set}
we know that $\phi\in T_{q}$ for some $q$, and then $\phi=\widetilde{\phi}$.
\end{proof}
Finally, we would like to state that, rather surprisingly, there are
self-polar functions in $\cvxn$ which are not rotationally invariant.
As one example, it is easy to compute directly that the function $\phi\in\textrm{Cvx}_{0}\left(\R^{2}\right)$
defined by \[
\phi(x,y)=\begin{cases}
\left|y\right| & \text{if }\left|x\right|\le1\\
\infty & \text{otherwise}\end{cases}\]
 is self-polar. This means that our classification of self-polar functions
in $\cvx$ does not give a complete classification of all self-polar
functions in $\cvxn$. 

\bibliographystyle{plain}
\bibliography{bulletin-paper-revised}

\end{document}